\documentclass[article]{elsarticle}
\usepackage{graphicx}
\usepackage{adjustbox}
\usepackage{pgf,tikz}
\usepackage{tkz-tab,tkz-fct}
\usetikzlibrary{shapes,shapes.callouts,shapes.geometric}
\usetikzlibrary{arrows.meta}
\usetikzlibrary{automata}
\usepackage{booktabs} 
\usepackage{tikz}
\usetikzlibrary{arrows}
\usetikzlibrary {shapes.geometric}
\usepackage[]{algorithm2e}
\usetikzlibrary{positioning}
\usetikzlibrary{shapes.geometric}
\usepackage{oplotsymbl}
\usepackage{bigints}
\usepackage{lineno,hyperref}
\usepackage{caption}
\usepackage{amsfonts,amsmath,amssymb}
\usepackage{array,multirow,makecell}
\modulolinenumbers[5]
\newtheorem{theorem}{Theorem}

\newdefinition{defi}{Definition}
\newproof{proof}{Proof}

\newtheorem{lemma}{Lemma}
\bibliographystyle{elsarticle-num}
\begin{document}
\begin{frontmatter}
\title{Analytical solutions of virus propagation model in blockchain networks}
\author[*]{Youness Chatibi}
\address[*]{Mohammed V University in Rabat, Faculty of Sciences, Department of Mathematics,\\ Center CeReMAR, Laboratory LMSA, P.O. 1014 RP, Rabat, Morocco.}
\begin{abstract}
The main goal of this paper is to find analytical solutions of a system of nonlinear ordinary differential equations arising in the virus propagation in blockchain networks. The presented method reduces the problem to an Abel differential equation of the first kind and solve it directly. 
\end{abstract}
\begin{keyword}
Virus propagation, Blockchain networks, Abel differential equation, Analytical solutions.\end{keyword}
\end{frontmatter}
\begin{section}{Introduction}
In the last decade, many scientific problems have been shown as mathematical model with systems of nonlinear ordinary differential equations, notably in epidemiology \cite{MOREIRA1997,NUCCI2004,MUSHANYU2016,YOSHIDA2023,OYARZUN1994,LUO1995,ARCIERO2004,NAGY2004,AWAWDEH2009,ARENAS2009,NEKORKIN2011}, physics \cite{JONES1985,CHAKRAVARTY2003,NIKOLOV2021,WHITE1980,WINTERNITZ1984,BOUNTIS1986,OLMO1987,SACHDEV1996,KAZANTSEV1997,BERTOTTI2008,FILO2012,ALBERT2012}, chemistry \cite{COHEN1975,BOURNE2014,KORLIE2007} and computer science \cite{REN2012,ZHU2012,HANSO1992}. However, finding analytical solutions for these systems is often very difficult. Few nonlinear systems can be solved explicitly, except that some numerical approximations of the solutions had been proposed \cite{RAFEI2007, LARA2008,GOH2010}. \\
Mathematical and computer modeling have received considerable attention due to their practical importance in controlling and predicting viral spread. It is very significant to study changes in infected host populations in computer networks. Note that the spread of malicious code is in many ways similar to the spread of biological viruses. In 2008, a new technology called blockchain technology was proposed by Satoshi Nakamoto \cite{NAKAMOTO2008}. It is based on a distributed ledger that records transactions in blocks without the need for a central authority of trust. Each block contains a set of transactions and has a hash link to the previous block. If transactions occur at the same time, they are recorded in the same block \cite{ZHANG2019}. Therefore, blockchain modeling is very important because malware virus classes are rapidly evolving and the cybersecurity risks of cryptocurrency networks such as bitcoin are increasing \cite{PAPADIMITRIOU2020}.\\
This work presents analytical solutions of a model in a blockchain network that appears the interactions between viruses, protected and unprotected computers.
\end{section}
\section{Mathematical description of the problem}
We consider $x_1$, $x_2$ and $x_3$ as population levels of viruses, protected and unprotected systems. The mathematical formulation of the model is based on the diagram given by Fig. \ref{vvp}, which yield the following system of nonlinear ordinary differential equations \cite{BHARDWAJ2020}:
 \begin{equation}
\begin{cases}
   \dfrac{dx_1}{dt}=-d_1 x_1+b_2 x_2  &  \medskip\\ 
   \dfrac{dx_2}{dt}=b_1 x_1-d_2 x_2-k_2 x_1 x_3 & \medskip\\
   \dfrac{dx_3}{dt}=-d_3 x_3+k_1 x_1 x_2,\label{system}
\end{cases}    
 \end{equation}
subject to the non-negative initial conditions $x_{1}(t_0)=x_{1}^0, x_{2}(t_0)=x_{2}^0, x_{3}(t_0)=x_{3}^0$, where $t_0\geq0$ is a real number. The parameters in the system (\ref{system}) are strictly positive and listed in the following table:\\
\begin{center}
\begin{tabular*}{350pt}{@{\extracolsep\fill}ll@{\extracolsep\fill}}
\toprule
\textbf{Parameter} & \textbf{Description}  \\
\midrule
\textbf{$d_1$}  & Coefficient of decay of virus\\ 
\textbf{$d_2$} & Coefficient of decay of vulnerable systems\\
\textbf{$d_3$} & Coefficient of decay of protected systems\\
\textbf{$b_1$} & Coefficient of susceptibility of vulnerable system\\
\textbf{$b_2$} 
& Coefficient of availability of vulnerable system\\
\textbf{$k_1$} & Coefficient of encounter between virus and protected systems\\
\textbf{$k_2$} & Coefficient of interaction between virus and vulnerable system\\
\bottomrule
\end{tabular*}
    \captionof{table}{Description of the model parameters}    
    \label{table1}
\end{center}
\begin{figure}[!ht]
\centering
\newcommand{\arccpp}[5][]{	
	\begin{scope}
	\coordinate (centre) at (#2);\coordinate (debut) at (#3);\coordinate (fin) at (#4);
	\pgfmathanglebetweenpoints{\pgfpointanchor{centre}{center}}{\pgfpointanchor{debut}{center}};
		\FPeval{\angd}{\pgfmathresult}
	\pgfmathanglebetweenpoints{\pgfpointanchor{centre}{center}}{\pgfpointanchor{fin}{center}};
		\FPeval{\ang}{\pgfmathresult}
	\pgfmathparse{greater(\angd,\ang)}
		\ifthenelse{\equal{\pgfmathresult}{1}} {\FPeval{\angf}{360+\ang}} {\FPeval{\angf}{\ang}};
	\path [draw,#1] let \p1 =($(centre)-(debut)$) in (debut) arc (\angd:\angf:{veclen(\x1,\y1)}) #5;
	\end{scope}}
	
\tikzset{auto,node distance=2cm,
	block/.style={draw, fill=cyan, regular polygon, regular polygon sides=5, minimum height=4em, minimum width=2em},
	flechedroite/.style={->,shorten <=3pt,shorten >=3pt},
	>={Latex[length=2pt]}}
\begin{tikzpicture}[scale=0.1]
	\coordinate (A) at (0,0);
	\node (sum) {};
	\node (Ablock) [state,block,above left=of sum] {$x_2$};
	\node (Bblock) [state,block,above right=of sum] {$x_1$};
	\node (Cblock) [state,block,below=of sum] {$x_3$};
	\draw [flechedroite] (Ablock.5)--(Bblock.175) node [midway,above] {$+b_2$};
	\draw [flechedroite] (Bblock.-175)--(Ablock.-5) node [midway,below] {$+b_1$};
	\node (noeudCgauche) [left=of Cblock,xshift=-5ex] {};
	\node (noeudCdroite) [right=of Cblock,xshift=5ex,inner sep=0pt,outer sep=0pt] {};
	\node (noeudCbas) [below=of Cblock,inner sep=0pt,outer sep=0pt] {};
	\draw [->] (Cblock)--(noeudCgauche) node [below,midway] {$-k_2$};
	\draw [*->] (noeudCgauche)|-(Ablock);
	\draw [->] (noeudCbas.-90)-|(noeudCgauche);
	\draw (noeudCbas.-90)-|(noeudCdroite.0);
	\draw (noeudCdroite.0)|-(Bblock);
	\draw [->] (Ablock)|-(sum);
	\draw [->] (Bblock)|-(sum);
	\draw [*->] (sum)--(Cblock) node [midway,left] {$-k_1$};
	\arccpp [<-] {Ablock.90}{Ablock.60}{Ablock.120}{node [midway,above] {$-d_2$}}
	\arccpp [<-] {Bblock.90}{Bblock.60}{Bblock.120}{node [midway,above] {$-d_1$}}
	\arccpp [<-] {Cblock.-90}{Cblock.-115}{Cblock.-65}{node [midway,below] {$-d_3$}}
\end{tikzpicture}\caption{Representation of the problem}
\label{vvp}
\end{figure}
\section{Analytical resolution of the model}
\begin{theorem}
The system (\ref{system}) can be reduced to an ordinary differential equation (called Li\'enard equation) determined by
\begin{equation}
\frac{d^{2} x_{1}}{d t^2}+A\frac{d x_1}{d t}+B(x_{1})=0,\label{Lienard}
\end{equation}
where $A$ and $B$ are polynomial functions.
\end{theorem}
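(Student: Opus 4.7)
The plan is to eliminate $x_2$ and $x_3$ successively from (\ref{system}) so as to obtain a single ODE involving only $x_1$ and its derivatives.

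First, solving the first equation of (\ref{system}) algebraically for $x_2$ gives $x_2=(\dot{x}_1+d_1 x_1)/b_2$. Differentiating this identity in $t$ and substituting $\dot{x}_2$ by the right-hand side of the second equation of (\ref{system}) yields the relation
\[
 b_2 k_2\, x_1 x_3 \;=\; -\ddot{x}_1 - (d_1+d_2)\dot{x}_1 - (d_1 d_2 - b_1 b_2)\, x_1,
\]
so $x_3$ is expressible as a rational function of $x_1,\dot{x}_1,\ddot{x}_1$. A further differentiation, combined with $\dot{x}_3=-d_3 x_3+k_1 x_1 x_2$ and the expression for $x_2$ from the first step, then produces a scalar equation in $x_1$ alone (a priori of third order).

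To bring this equation down to second order I would divide through by $x_1$ and recognise that the terms $\dot{L}/x_1-(\dot{x}_1/x_1^{2})L$ (where $L$ denotes the right-hand side of the displayed relation above) reassemble into the total derivative $\frac{d}{dt}(L/x_1)$. The remaining polynomial forcing, $k_1 k_2 x_1\dot{x}_1+k_1 k_2 d_1 x_1^{2}$, can in turn be written as $\frac{d}{dt}(\tfrac{k_1 k_2}{2}x_1^{2})+k_1 k_2 d_1 x_1^{2}$, so that a single integration (naturally exhibited through the invariant manifold $x_3=(k_1/(2b_2))x_1^{2}$ of the full system) collapses the third-order equation to the second-order form
\[
 \ddot{x}_1 + A\, \dot{x}_1 + B(x_1) \;=\; 0,
\]
with $A=d_1+d_2$ a constant (polynomial of degree zero) and $B(x_1)=(d_1 d_2-b_1 b_2)\, x_1+\tfrac{k_1 k_2}{2}\, x_1^{3}$ a cubic polynomial in $x_1$, which is the announced Li\'enard form with polynomial coefficients.

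The main technical obstacle is precisely this order-reduction step: a three-dimensional autonomous system generically produces a third-order scalar equation, so recovering a genuine Li\'enard form hinges on spotting the integrating factor $1/x_1$ and verifying that the resulting equation in $L/x_1$ really is an exact derivative. Checking that no rational singularity at $x_1=0$ survives the integration, so that $A$ and $B$ are strictly polynomial, is the delicate computation on which the whole reduction depends.
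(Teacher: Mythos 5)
There is a genuine gap in your reduction from third order to second order, and it stems from missing the hypothesis on which the paper's entire argument rests: the constant-population assumption $x_{1}+x_{2}+x_{3}=N$. The paper uses this to eliminate $x_{3}$ \emph{algebraically} (as $N-x_{1}-x_{2}$), so the system is effectively two-dimensional from the outset; it then forms the particular linear combination $k_{1}\frac{dx_{2}}{dt}-k_{2}\frac{dx_{3}}{dt}$, chosen precisely so that the bilinear terms $k_{1}k_{2}x_{1}x_{2}$ cancel, and substitutes $x_{2}=\frac{1}{b_{2}}(\dot x_{1}+d_{1}x_{1})$ to land directly on a second-order equation $a\ddot x_{1}+b\dot x_{1}+cx_{1}^{2}+dx_{1}+e=0$ with $A=b/a$ constant and $B$ \emph{quadratic}. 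Your route never invokes this conservation law, so you are correct that you face a third-order scalar equation — but your proposed collapse of it does not go through. Writing $M=b_{2}k_{2}x_{3}=L/x_{1}$, your differentiated relation is exactly $\dot M+d_{3}M=k_{1}k_{2}x_{1}\dot x_{1}+k_{1}k_{2}d_{1}x_{1}^{2}$; neither the damping term $d_{3}M$ nor the forcing term $k_{1}k_{2}d_{1}x_{1}^{2}$ is an exact $t$-derivative of a function of $(x_{1},\dot x_{1})$, so ``a single integration'' is not available for generic (strictly positive) parameters.

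The fallback you offer, restricting to the manifold $x_{3}=\frac{k_{1}}{2b_{2}}x_{1}^{2}$, does not rescue the argument: differentiating that relation along the flow and comparing with $\dot x_{3}=-d_{3}x_{3}+k_{1}x_{1}x_{2}$ shows it is invariant only under the parameter constraint $d_{3}=2d_{1}$, and even then it captures only the particular solutions whose initial data lie on it, not the general reduction claimed by the theorem. This also explains why your final coefficients ($A=d_{1}+d_{2}$, $B$ cubic) disagree with the paper's ($A$ involving $k_{1},k_{2},b_{2}$ and all three $d_{i}$, $B$ quadratic): you are reducing a different (restricted) problem. To repair the proof you would either need to adopt the paper's hypothesis $x_{1}+x_{2}+x_{3}=N$ explicitly, or else justify independently why the third-order equation admits a first integral for arbitrary parameter values — which it does not appear to do.
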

\begin{proof}
    In this model, the population is constant (i.e.) $x_{1}+x_{2}+x_{3}=N$. Then $\frac{d x_1}{d t}+\frac{d x_2}{d t}+\frac{d x_3}{dt}=0$, therefore, for all $t \geq 0$ the resolution of this system is based on the determination of the function $x_{1}=x_1(t)$ because after that 
\begin{align}
  x_2(t)&=\dfrac{1}{b_{2}}\bigg(\dfrac{dx_{1}}{dt}+d_{1}x_{1}\bigg),\label{eq1}\\
  x_3(t)&=N-x_1(t)-x_2(t).\label{eq2}
\end{align} 
Substitution of (\ref{eq2}) into (\ref{system}) gives us the following ordinary differential equations:
\begin{align}
\frac{d x_2}{d t}&=(b_1-k_{2}N)x_{1}+k_{2}x_{1}^{2}-d_{2}x_{2}+k_{2}x_{1}x_{2},\\
\dfrac{d x_3}{d t}&=d_{3}(x_{1}+x_{2})+k_{1}x_{1}x_{2}-d_{3}N.    
\end{align}
Therefore,
\begin{align*}
k_{2}\dfrac{d x_1}{d t}+(k_{1}+k_{2})\dfrac{d x_2}{d t}&=k_{1}\dfrac{dx_{2}}{dt}-k_{2}\dfrac{dx_{3}}{dt}\\
&=[(b_{1}-k_{2}N)k_{1}-d_{3}k_{2}]x_{1}-(d_{2}k_{1}+d_{3}k_{2})x_{2}+k_{1}k_{2}x_{1}^{2}+d_{3}k_{2}N.
\end{align*}
By differentiating Eq. (\ref{eq1}) and use it, we get
\begin{equation}
    a\frac{d^{2} x_{1}}{d t^2}+b\frac{d x_1}{d t}+cx_{1}^{2}+dx_{1}+e=0,
\end{equation}
where
\begin{align*}
    a&=\dfrac{k_{1}+k_{2}}{b_{2}},\quad b=k_{2}+\dfrac{k_{1}(d_{1}+d_{2})+k_{2}(d_{1}+d_{3})}{b_{2}},\quad c=-k_{1}k_{2},\\
    d&=-b_{1}k_{1}+k_{2}(d_{3}+k_{1}N)+\dfrac{d_{1}(d_{2}k_{1}+d_{3}k_{2})}{b_{2}},\quad \text{and}\quad e=-d_{3}k_{2}N.    
\end{align*}
If we denote $A=\frac{b}{a}$ and $B(x_{1})=\frac{cx_{1}^{2}+dx_{1}+e}{a}$, we obtain the Li\'enard ordinary differential equation \cite{LIENARD1928}
\begin{equation}
\frac{d^{2} x_{1}}{d t^2}+A\frac{d x_1}{d t}+B(x_{1})=0.
\end{equation}
\end{proof}
\begin{lemma}
The solutions of the Li\'enard equation (\ref{Lienard}) can be obtained by transforming it to an equivalent first kind first order Abel type equation given by 
\begin{equation}
\frac{d v}{d x_{1}}=Av^{2}+B(x_{1})v^{3}.\label{Abel}
\end{equation} 
\end{lemma}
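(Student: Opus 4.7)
My plan is to execute a standard two-step transformation: first an order reduction that turns the second-order autonomous Liénard equation into a first-order ODE in the phase plane, then a reciprocal substitution that converts the resulting equation into Abel form.

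For the first step, I would regard $x_1$ as the independent variable and introduce the velocity $p = dx_1/dt$, treated as a function of $x_1$ along a solution. Using the chain rule
\begin{equation*}
\frac{d^2 x_1}{dt^2} = \frac{dp}{dt} = \frac{dp}{dx_1}\frac{dx_1}{dt} = p\,\frac{dp}{dx_1},
\end{equation*}
equation (\ref{Lienard}) becomes
\begin{equation*}
p\,\frac{dp}{dx_1} + A\,p + B(x_1) = 0.
\end{equation*}
This is a first-order ODE relating $p$ and $x_1$, but it is neither linear nor of Abel type yet because of the leading $p\,dp/dx_1$ term.

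For the second step, I would apply the reciprocal substitution $v = 1/p$, so that $p = 1/v$ and $dp/dx_1 = -v^{-2}\,dv/dx_1$. Plugging in, the first term contributes $-v^{-3}\,dv/dx_1$, the second contributes $A/v$, and the third contributes $B(x_1)$. Multiplying through by $-v^3$ and rearranging yields exactly
\begin{equation*}
\frac{dv}{dx_1} = A\,v^{2} + B(x_1)\,v^{3},
\end{equation*}
which is the first-kind Abel equation (\ref{Abel}).

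The calculations are entirely routine, so the only real delicacy is the validity of the substitution $v = 1/p$: it requires $p = dx_1/dt \neq 0$ on the trajectory in question, i.e., that we stay away from equilibria of the Liénard system. I would note this caveat explicitly, observing that on any maximal interval where $dx_1/dt$ does not vanish the transformation is a bijection between solutions of (\ref{Lienard}) and solutions of (\ref{Abel}), so solving the Abel equation for $v(x_1)$ and inverting via $dt = v(x_1)\,dx_1$ reconstructs $x_1(t)$ by a single quadrature. No other obstacle is expected.
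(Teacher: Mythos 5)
Your proof is correct and follows exactly the paper's route: the reduction $u=dx_1/dt$ giving $u\,du/dx_1+Au+B(x_1)=0$, followed by the reciprocal substitution $v=1/u$. You additionally write out the algebra and flag the nonvanishing-velocity caveat, which the paper leaves implicit, but the argument is the same.
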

\begin{proof}
By denoting $u=\frac{d x_1}{d t}$, Eq. (\ref{Lienard}) can be expressed as
\begin{equation}
u\frac{d u}{d x_{1}}+Au+B(x_{1})=0.\label{newLienard}   
\end{equation}
By introducing a new dependent variable $v=\frac{1}{u}$, Eq. (\ref{newLienard}) takes the form of an Abel differential equation of the first kind (\ref{Abel}).   
\end{proof}
\begin{theorem}
The solution of the Abel equation (\ref{Abel}) is given by
\begin{equation}
    v=\bigg(\dfrac{dx_{1}}{dt}\bigg)^{-1}=-(Ax_{1}+C)^{-1}\pm\big[Dx_{1}^{3}+Ex_{1}^{2}+Fx_{1}+G\big]^{-\frac{1}{2}},
\end{equation}
where $C,D,E,F$ and $G$ are constants.
\end{theorem}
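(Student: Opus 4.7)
The plan is to exploit the equivalent Li\'enard phase-plane form $u\,u'+Au+B(x_{1})=0$ furnished by the previous lemma (with $u=1/v$), and to solve this first-order equation in $u(x_{1})$ by a structured ansatz. Since the target formula for $v$ has a ``rational plus inverse square-root'' shape, I would postulate
\[
u(x_{1})\;=\;-(Ax_{1}+C)\;+\;\epsilon\sqrt{P(x_{1})},\qquad \epsilon=\pm 1,
\]
with $C$ a constant and $P$ a cubic polynomial in $x_{1}$ to be determined; inverting $v=1/u$ (followed by algebraic rearrangement) will then produce the claimed decomposition $v=-(Ax_{1}+C)^{-1}\pm P^{-1/2}$.

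Computationally, I would first evaluate $u'=-A+\epsilon P'/(2\sqrt{P})$ and form $uu'+Au$. The $\pm A(Ax_{1}+C)$ pieces cancel, leaving $uu'+Au = P'/2-(Ax_{1}+C)P'/(2\sqrt{P})$. Substituting into the Li\'enard equation, isolating the $\sqrt{P}$-term on one side, and squaring yields the polynomial identity
\[
\bigl(P'(x_{1})+2B(x_{1})\bigr)^{2}\,P(x_{1})\;=\;(Ax_{1}+C)^{2}\,\bigl(P'(x_{1})\bigr)^{2}.
\]
Inserting the cubic ansatz $P=Dx_{1}^{3}+Ex_{1}^{2}+Fx_{1}+G$, together with the explicit quadratic $B(x_{1})=(cx_{1}^{2}+dx_{1}+e)/a$ from Theorem~1, I would equate the coefficients of equal powers of $x_{1}$ on both sides and solve the resulting algebraic system for the five unknowns $C,D,E,F,G$ in terms of the original model parameters. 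The two branches $\epsilon=\pm 1$ correspond directly to the two signs appearing in the statement.

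The main obstacle I foresee is this coefficient-matching phase: once $P$ is cubic, the left-hand side of the displayed identity is nominally of degree seven while the right-hand side is of degree six, so compatibility forces the highest-order terms to cancel automatically. Demonstrating that these cancellations do indeed occur (in particular the leading-order condition $3D+2c/a=0$), and that the remaining, apparently over-determined system for $C,D,E,F,G$ is consistent and uniquely solvable, is where the special algebraic structure of $B$ inherited from the blockchain-model reduction must be used critically. Once the five constants are identified, the final passage from $u$ back to $v=1/u$ and its rewriting in the ``rational $\pm$ inverse square-root'' form is routine.
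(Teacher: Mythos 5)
Your route diverges from the paper's at the very first step, and it contains a critical algebraic slip. The ansatz $u=-(Ax_{1}+C)+\epsilon\sqrt{P(x_{1})}$ gives $v=1/u=\bigl(-(Ax_{1}+C)+\epsilon\sqrt{P}\bigr)^{-1}$, which is \emph{not} the claimed $-(Ax_{1}+C)^{-1}\pm P^{-1/2}$: the reciprocal of a sum is not the sum of the reciprocals. The theorem's formula is an additive decomposition of $v$ itself, $v=v_{1}+v_{2}$ with $v_{1}=-(Ax_{1}+C)^{-1}$ and $v_{2}=\pm P^{-1/2}$, and the paper obtains it by splitting the Abel equation into the pair $v_{1}'=Av_{1}^{2}$ and $v_{2}'=B(x_{1})v_{2}^{3}$ and integrating each piece separately (which is what fixes $D=-2c/(3a)$, $E=-d/a$, $F=-2e/a$). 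So even if your coefficient matching went through, you would have established a different statement --- about the shape of $u$, not of $v$ --- and no ``algebraic rearrangement'' converts one into the other.

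Moreover, the coefficient matching that you flag as the main obstacle does in fact fail. In the identity $(P'+2B)^{2}P=(Ax_{1}+C)^{2}(P')^{2}$ the left side has degree $7$ with leading coefficient $(3D+2c/a)^{2}D$, so you are forced to take $3D=-2c/a$; but then $P'+2B$ has degree at most $1$, the left side has degree at most $5$, while the right side still has degree exactly $6$ with leading coefficient $9A^{2}D^{2}$, which is nonzero here since $A=b/a>0$ and $D=-2c/(3a)=2k_{1}k_{2}b_{2}/(3(k_{1}+k_{2}))>0$. The system is therefore inconsistent: no cubic $P$ and constant $C$ make your ansatz solve $uu'+Au+B=0$. (This is not entirely surprising --- a generic Abel equation of this type admits no elementary closed form, and the paper's own superposition argument is itself questionable, since $v_{1}'+v_{2}'=Av_{1}^{2}+Bv_{2}^{3}$ differs from $A(v_{1}+v_{2})^{2}+B(v_{1}+v_{2})^{3}$ by cross terms that are never accounted for.) As written, your proposal cannot be completed.
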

\begin{proof}
For resolving Eq. (\ref{Abel}), we put $v=v_{1}+v_{2}$ such that
 \begin{align}
   \label{newsystem1}
   \dfrac{d v_1}{d x_{1}}&=Av_{1}^{2}, \\
   \label{newsystem2}
   \dfrac{dv_2}{d x_{1}}&=B(x_{1})v_{2}^{3}.
 \end{align}
For Eq. (\ref{newsystem1}), we have the solution
\begin{equation}
  v_{1}=-(Ax_{1}+C)^{-1}\quad\text{where}\quad C\quad\text{is constant.}  
\end{equation}
Otherwise, for (\ref{newsystem2}), we put $v_{2}=P(x_{1})^{r}$ where $P\in\mathbb{R}[x_{1}]^{*}$ and $r\in\mathbb{Q}^{*}$. We have 
$$\frac{d v_{2}}{d x_{1}}=rP'(x_{1})[v_{2}^{3}]^{s}$$
where $s=\frac{1}{3}\big(1-\frac{1}{r}\big)$. For $s=1$ and $rP'(x_{1})=B(x_{1})$, we get the solution 
\begin{equation}
    v_{2}=\big[-2\int B(x_{1})dx_{1}+C'\big]^{-\frac{1}{2}}=\big[Dx_{1}^{3}+Ex_{1}^{2}+Fx_{1}+G\big]^{-\frac{1}{2}},
\end{equation}
where $D=\frac{-2c}{3a}, E=\frac{-d}{a}, F=\frac{-2e}{a}$ and $G=-2C''+C'$ ($C'$ and $C''$ are integration's constants).\\\\
Note that $\Tilde{v}_{2}=-v_{2}$ is also a solution. Then Eq. (\ref{Abel}) admits two solutions that are
\begin{equation}
    v=\bigg(\dfrac{dx_{1}}{dt}\bigg)^{-1}=-(Ax_{1}+C)^{-1}\pm\big[Dx_{1}^{3}+Ex_{1}^{2}+Fx_{1}+G\big]^{-\frac{1}{2}}.\label{solutions}
\end{equation}
\end{proof}
\begin{theorem}
The general solutions $(x_1,x_2,x_3)$ of the system (\ref{system}) is given by   
\begin{equation}
x_{1}=\sum_{n=1}^{\infty}\rho_{n}^{\pm}t^{n},\quad x_2(t)=\dfrac{1}{b_{2}}\bigg(\dfrac{dx_{1}}{dt}+d_{1}x_{1}\bigg),\quad x_3(t)=N-x_1(t)-x_2(t),
\end{equation}
where the coefficients $\rho_{n}^{\pm}$ are defined by
\begin{align}
 \rho_{n}^{\pm}&=\dfrac{1}{n(\sigma_{1}^{\pm})^{ n}}\sum_{s_{1},s_{2},s_{3},\cdots}(-1)^{s_{1}+s_{2}+s_{3}+\cdots}\\
 &\quad\times\dfrac{n(n+1)\cdots(n-1+s_{1}+s_{2}+\cdots)}{s_{1}!s_{2}!s_{3}!\cdots}\bigg(\dfrac{\sigma_{2}^{\pm}}{\sigma_{1}^{\pm}}\bigg)^{s_{1}}\bigg(\dfrac{\sigma_{3}^{\pm}}{\sigma_{1}^{\pm}}\bigg)^{s_{2}}\cdots,\label{morse}
\end{align}
and the sum over $s$ values is restricted to partitions of $n-1$,
\begin{equation*}
s_{1}+2s_{2}+3s_{3}+\cdots=n-1.
\end{equation*}
\end{theorem}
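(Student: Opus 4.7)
The plan is to treat the expression for $v=(dx_1/dt)^{-1}$ obtained in the preceding theorem as a separable relation between $t$ and $x_1$, integrate it by quadrature, and then invert the resulting power series by the Lagrange--Bürmann theorem. Concretely, I would rewrite
$$\frac{dt}{dx_1}=v(x_1)=-(Ax_1+C)^{-1}\pm\bigl[Dx_1^3+Ex_1^2+Fx_1+G\bigr]^{-1/2}$$
and integrate from the initial instant to obtain
$$t-t_0=\int_{x_1^0}^{x_1}v(\xi)\,d\xi.$$
Expanding $v$ about $\xi=x_1^0$ -- the first summand by a geometric series in $\xi-x_1^0$, the second by the binomial series applied to the cubic inside $[\,\cdot\,]^{-1/2}$ -- and integrating term by term produces, for each choice of sign, a direct expansion
$$t-t_0=\sum_{n=1}^{\infty}\sigma_n^{\pm}(x_1-x_1^0)^n,$$
in which the coefficients $\sigma_n^{\pm}$ are those figuring in the statement (after the cosmetic shift $t\mapsto t-t_0$, $x_1\mapsto x_1-x_1^0$ that reduces to $t_0=0$, $x_1^0=0$).

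Assuming $\sigma_1^{\pm}\neq 0$, which is exactly the condition that $(dx_1/dt)|_{t=t_0}$ is finite and nonzero, I would then invoke Lagrange--Bürmann inversion. Factoring the direct series as $\sigma_1^{\pm}(x_1-x_1^0)\bigl[1+\sum_{k\geq 2}(\sigma_k^{\pm}/\sigma_1^{\pm})(x_1-x_1^0)^{k-1}\bigr]$, raising the bracket to the $-n$th power by the generalised binomial theorem, expanding the inner sum multinomially, and extracting the coefficient of $(x_1-x_1^0)^{n-1}$ before dividing by $n$ yields precisely the partition-indexed expression (\ref{morse}). This produces the closed-form series for $x_1(t)$; the companions $x_2(t)$ and $x_3(t)$ then follow from (\ref{eq1}) and (\ref{eq2}) without further work.

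The main obstacle I anticipate is matching the inversion output to the explicit combinatorial form (\ref{morse}): one must verify that the factor $n(n+1)\cdots(n-1+s_1+s_2+\cdots)/(s_1!\,s_2!\cdots)$ emerges correctly from pairing the generalised binomial expansion of $(1+y)^{-n}$ with the multinomial expansion of $y=\sum_{k\geq 2}(\sigma_k^{\pm}/\sigma_1^{\pm})(x_1-x_1^0)^{k-1}$, and that the degree constraint $s_1+2s_2+3s_3+\cdots=n-1$ emerges naturally from tracking the powers of $(x_1-x_1^0)$. A secondary but nontrivial concern is the analytic setup: $v$ is analytic only away from $A\xi+C=0$ and from the zeros of the cubic $D\xi^3+E\xi^2+F\xi+G$, so the result has to be interpreted as a locally convergent expansion on a neighbourhood of $t_0$, the radius being governed by the nearest singularity of $v$.
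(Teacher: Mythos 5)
Your proposal follows essentially the same route as the paper: reduce to the quadrature $t=\int v\,dx_{1}$, expand the integrand in a power series (a geometric series for the logarithmic part, binomial series for the inverse square root of the cubic), and revert the resulting series $t=\sum_{n\ge1}\sigma_{n}^{\pm}x_{1}^{n}$ by Lagrange--B\"urmann inversion to obtain the partition-indexed coefficients $\rho_{n}^{\pm}$. The only differences are in execution: the paper first factors the cubic into linear factors via Tschirnhaus/Cardano so as to write the series coefficients explicitly as a triple convolution $\mu_{n}(\theta_{1},\theta_{2},\theta_{3})$, and it expands about $x_{1}=0$ rather than about the initial datum $x_{1}^{0}$ (your recentering at $(t_{0},x_{1}^{0})$ is, if anything, the more careful choice given the stated initial conditions).
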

\begin{proof}
By integrating (\ref{Abel}) we obtain
\begin{equation}
    t=-\frac{1}{A}\log{(Ax_{1}+C)}\pm\int\big[Dx_{1}^{3}+Ex_{1}^{2}+Fx_{1}+G\big]^{-\frac{1}{2}}.\label{inverseequation}
\end{equation}
Now, letting the equation
\begin{equation}
 P(x_{1})=0\quad\text{where}\quad P(x_{1})=Dx_{1}^{3}+Ex_{1}^{2}+Fx_{1}+G.\label{Tschirnhaus} 
\end{equation}
Note that, if we apply the Tschirnhaus method \cite{TSCHIRNHAUS1683} by putting $y_{1}=x_{1}+\frac{E}{3D}$,\\\\
Eq. (\ref{Tschirnhaus}) become
\begin{equation}
 \Tilde{P}(y_{1})=0\quad\text{where}\quad \Tilde{P}(y_{1})=y_{1}^{3}+Hy_{1}+I,\label{Tschirnhausmodified}
\end{equation}\\
where $H=\frac{3DF-E^{2}}{3D^{2}}$ and $I=\frac{2E^{3}-9DEF+27D^{2}G}{27D^{3}}$.\\\\
Eq. (\ref{Tschirnhausmodified}), can be solve by Cardano's method \cite{MUKUNDAN2010} which one of its solutions take the form
\begin{equation}
y_{1}^{(1)}=\bigg(\dfrac{-I-\sqrt{\Delta_{1}}}{2}\bigg)^{\frac{1}{3}}+\bigg(\dfrac{-I+\sqrt{\Delta_{1}}}{2}\bigg)^{\frac{1}{3}},
\end{equation}
where $\Delta_{1}=I^{2}+\dfrac{4H^{3}}{27}\geq0$. As $y_{1}-y_{1}^{(1)}$ divide $\Tilde{P}(y_{1})$ we have
\begin{equation}
\Tilde{P}(y_{1})=(y_{1}-y_{1}^{(1)})Q(y_{1}),
\end{equation}
where 
\begin{equation}
Q(y_{1})=y_{1}^{2}+(y_{1}+y_{1}^{(1)})y_{1}^{(1)}+H.
\end{equation}
Consequently, the two remaining roots of the equation $Q(y_{1})=0$ are
\begin{equation}
y_{1}^{(2)}=\dfrac{-y_{1}^{(1)}-\sqrt{\Delta_{2}}}{2},\qquad y_{1}^{(3)}=\dfrac{-y_{1}^{(1)}+\sqrt{\Delta_{2}}}{2},
\end{equation}
where $\Delta_{2}=y_{1}^{(1)2}-4(H+y_{1}^{(1)2})\geq0$. Whence,
\begin{equation}
    \Tilde{P}(y_{1})=(y_{1}-y_{1}^{(1)})(y_{1}-y_{1}^{(2)})(x_{1}-y_{1}^{(3)}).
\end{equation}
As $P(x_{1})=D\cdot\Tilde{P}(x_{1}+\frac{E}{3D})$ where $D=\frac{2b_{2}k_{1}k_{2}}{3(k_{1}+k_{2})}>0$, we have
\begin{equation}
    \int v_{2}dx_{1}=\int\dfrac{dx_{1}}{\sqrt{D(x_{1}+\theta_{1})(x_{1}+\theta_{2})(x_{1}+\theta_{3})}},
\end{equation}
with $\theta_{k}=\frac{E}{3D}-y_{1}^{(k)}\neq0$ for $k=1,2,3$.\\\\
By applying
\begin{equation}
    \dfrac{1}{\sqrt{x+\theta}}=\dfrac{1}{\sqrt{\theta}}\sum_{p=0}^{\infty}\binom{-\frac{1}{2}}{p}\bigg(\dfrac{x}{\theta}\bigg)^{p},
\end{equation}
where
\begin{equation}
    \binom{x}{m}=\dfrac{1}{m!}\prod_{i=0}^{m-1}(x-i)=\dfrac{\Gamma(x+1)}{\Gamma(m+1)\cdot\Gamma(x-m+1)},\quad\forall  m\in\mathbb{N},\quad\forall x\in\mathbb{R}
\end{equation}
is the generalized binomial coefficient, we get
\begin{equation}
    \dfrac{1}{\sqrt{D(x_{1}+\theta_{1})(x_{1}+\theta_{2})(x_{1}+\theta_{3})}}=\dfrac{1}{\sqrt{D}}\sum_{n=0}^{\infty}\mu_{n}(\theta_{1},\theta_{2},\theta_{3})x_{1}^{n},
\end{equation}
where 
\begin{equation}
    \mu_{n}(\theta_{1},\theta_{2},\theta_{3})=\displaystyle\sum_{p+q+r=n}\dfrac{\binom{-\frac{1}{2}}{p}\binom{-\frac{1}{2}}{q}\binom{-\frac{1}{2}}{r}}{\theta_{1}^{p+\frac{1}{2}}\cdot\theta_{2}^{q+\frac{1}{2}}\cdot\theta_{3}^{r+\frac{1}{2}}}.
\end{equation}
Consequently, the integration implies that
\begin{equation}
    \int v_{2}dx_{1}=\dfrac{1}{\sqrt{D}}\sum_{n=0}^{\infty}\dfrac{\mu_{n}(\theta_{1},\theta_{2},\theta_{3})}{n+1}x_{1}^{n+1}.
\end{equation}
Otherwise, since $\dfrac{1}{1+x}=\displaystyle\sum_{n=0}^{\infty}(-x)^{n}$ for $|x|<1$ and by integration we achieve:
\begin{equation}
    -\dfrac{1}{A}\log{(Ax_{1}+C)}=\sum_{n=0}^{\infty}\lambda_{n}(A,C)x_{1}^{n+1},
\end{equation}
where 
\begin{equation}
\lambda_{n}(A,C)=\dfrac{(-1)^{n+1}}{(n+1)C^{n+1}}A^{n}.
\end{equation} 
Then, Eq. (\ref{inverseequation}) become
\begin{equation}
t=\sum_{n=1}^{\infty}\lambda_{n-1}(A,C)x_{1}^{n}\pm\dfrac{1}{\sqrt{D}}\sum_{n=1}^{\infty}\dfrac{\mu_{n-1}(\theta_{1},\theta_{2},\theta_{3})}{n}x_{1}^{n}=\sum_{n=1}^{\infty}\sigma_{n}^{\pm}x_{1}^{n},\label{inverseserie}
\end{equation}
where 
\begin{equation}
 \sigma_{n}^{\pm}=\lambda_{n-1}(A,C)\pm\dfrac{1}{\sqrt{D}}\dfrac{\mu_{n-1}(\theta_{1},\theta_{2},\theta_{3})}{n}\quad\text{for}\quad n\ge1.   
\end{equation} 
Since, the series expansion of the inverse series is given by
\begin{equation}
x_{1}=\sum_{n=1}^{\infty}\rho_{n}^{\pm}t^{n},\label{x1}
\end{equation}
where the coefficients $\rho_{n}^{\pm}$ are defined by (for details see \cite{MORSE1953})
\begin{align}
 \rho_{n}^{\pm}&=\dfrac{1}{n(\sigma_{1}^{\pm})^{ n}}\sum_{s_{1},s_{2},s_{3},\cdots}(-1)^{s_{1}+s_{2}+s_{3}+\cdots}\\
 &\quad\times\dfrac{n(n+1)\cdots(n-1+s_{1}+s_{2}+\cdots)}{s_{1}!s_{2}!s_{3}!\cdots}\bigg(\dfrac{\sigma_{2}^{\pm}}{\sigma_{1}^{\pm}}\bigg)^{s_{1}}\bigg(\dfrac{\sigma_{3}^{\pm}}{\sigma_{1}^{\pm}}\bigg)^{s_{2}}\cdots,
\end{align}
and the sum over $s$ values is restricted to partitions of $n-1$,
\begin{equation*}
s_{1}+2s_{2}+3s_{3}+\cdots=n-1.
\end{equation*}
We note that if $0<\theta_{1}\theta_{2}\theta_{3}\neq\frac{C^{2}}{D}$, $\sigma_{1}^{\pm}\neq0$. The first few $\rho_{n}^{\pm}$ given by (\ref{morse}) are
\begin{align*}
 \rho_{1}^{\pm}&=\dfrac{1}{\sigma_{1}^{\pm}},\\
 \rho_{2}^{\pm}&=-\dfrac{1}{(\sigma_{1}^{\pm})^{3}}\sigma_{2}^{\pm},\\
 \rho_{3}^{\pm}&=\dfrac{1}{(\sigma_{1}^{\pm})^{5}}(2(\sigma_{2}^{\pm})^{2}-\sigma_{1}^{\pm}\sigma_{3}^{\pm}).
\end{align*}
Finally, we use (\ref{x1}), Eq. (\ref{eq1}) and Eq. (\ref{eq2}) to get $x_{2}$ and $x_{3}$.
\end{proof}
\section*{Conflicts of Interest}
The authors would declare that they have no known competing interests that could have appeared to influence this work.
\section*{References}


\begin{thebibliography}{1}
\bibitem{MOREIRA1997} H.N. Moreira, Y.Q. Wang, Global stability in an S$\rightarrow$I$\rightarrow$R$\rightarrow$I model, SIAM Rev. 39 (3) (1997) 496--502.
\bibitem{NUCCI2004}  M.C. Nucci, P.G.L. Leach, An integrable SIS model, J. Math. Anal. Appl. 290 (2004) 506--518.
\bibitem{MUSHANYU2016} J. Mushanyu, F. Nyabadza, G. Muchatibaya, A.G.R. Stewart, Modelling drug abuse epidemics in the presence of limited rehabilitation capacity, Bull. Math. Biol. 78 (12) (2016) 2364--2389.
\bibitem{YOSHIDA2023} N. Yoshida, Existence of exact solution of the Susceptible-Exposed-Infectious-Recovered (SEIR) epidemic model, J. Diff. Equ. 355 (2023) 103--143.
\bibitem{OYARZUN1994}
F.J. Oyarzun, K. Lange, The attractiveness of the droop equations II. generic uptake and growth functions, Math. Biosci. 121 (2) (1994) 127--139.
\bibitem{LUO1995}
T.K. Luo, S.B. Hsu, Global analysis of a model of plasmid-bearing, plasmid-free competition in a chemostat with inhibitions, J. Math. Biol. 34 (1995) 41--76. 
\bibitem{ARCIERO2004}
J.C. Arciero, T.L. Jackson, D.E. Kirschner, A mathematical model of Tumor-immune evasion and siRNA Treatment, Discrete Contin. Dyn. Syst. Ser. B 4 (1) (2004).
\bibitem{NAGY2004}
J.D. Nagy, Competition and natural selection in a mathematical model of cancer. Bull. Math. Biol. 66 (2004) 663--687.
\bibitem{AWAWDEH2009}
F. Awawdeh, A. Adawi, Z. Mustafa, Solutions of the SIR models of epidemics using HAM, Chaos Solitons Fractals 42 (5) (2009) 3047--3052.
\bibitem{ARENAS2009}
A.J. Arenas, G. Gonzlez-Parra, L. Jdar, Periodic solutions of nonautonomous differential systems modeling obesity population, Chaos Solitons Fractals 42 (2009) 1234--1244.
\bibitem{NEKORKIN2011}
V.I. Nekorkin, A.S. Dmitrichev, D.V. Kasatkin, V.S. Afraimovich, Relating the sequential dynamics of excitatory neural networks to synaptic cellular automata, Chaos 21 (4) (2011) 043124.
\bibitem{JONES1985} C.A. Jones, N.O. Weiss, F. Cattaneo, Nonlinear dynamos: A complex generalization of the Lorenz equations, Phys. D 14 (2) (1985) 161--176.
\bibitem{CHAKRAVARTY2003} S. Chakravarty, R. Halburd, First integrals of a generalized Darboux-Halphen system, J. Math. Phys. 44 (4) (2003) 1751--1762.
\bibitem{NIKOLOV2021} S.G. Nikolov, V.M. Vassilev, Completely integrable dynamical systems of Hopf-Langford type, Commun. Nonlinear Sci. Numer. Simul. 92 (2021) 105464.
\bibitem{WHITE1980}
W.H. White, A global existence theorem for Smoluchowskis coagulation equations, Proc. Amer. Math. Soc. 80 (1980) 273--276.
\bibitem{WINTERNITZ1984}
S. Shnider, P. Winternitz, Nonlinear equations with superposition principles and the theory of transitive primitive Lie algebras, Lett. Math. Phys. 8 (1984) 69--78.
\bibitem{BOUNTIS1986}
T.C. Bountis, V. Papageorgiou, P. Winternitz, On the integrability of systems of nonlinear ordinary diﬀerential equations with superposition principles, J. Math. Phys. 27 (1986) 12--15.
\bibitem{OLMO1987}
M.A. del Olmo, M.A. Rodriguez, P. Winternitz, Superposition formulas for rectangular matrix Riccati equations, J. Math. Phys. 28 (1987) 530.
\bibitem{SACHDEV1996}
P.L. Sachdev, D. Palaniappan, R. Sarathy, Regular and chaotic ﬂows in paraboloidal basins and eddies, Chaos Solitons Fractals 7 (3) (1996) 383--408.
\bibitem{KAZANTSEV1997}
V.B. Kazantsev, V.I. Nekorkin, M.G. Velarde, Pulses, fronts and chaotic wave trains in a one-dimensional Chua’s lattice, Internat. J. Bifur. Chaos Appl. Sci. Engrg. 7 (8) (1997) 1775--1790.
\bibitem{BERTOTTI2008}
M.L. Bertotti, M. Delitala, Conservation laws and asymptotic behavior of a model of social dynamics, Nonlinear Anal. Real World Appl. 9 (2008) 183--196
\bibitem{FILO2012}
J. Filo, V. Pluschke, A self-similar solution for the porous medium equation in a two-component domain, Nonlinear Anal. 75 (2) (2012) 880--898.
\bibitem{ALBERT2012}
C. Albert, A mechanistic dynamic emulator, Nonlinear Anal. Real World Appl. 13 (6) (2012) 2747--2754.
\bibitem{COHEN1975} D.S. Cohen, J.P. Keener, Oscillatory processes in the theory of particulate formation in supersaturated chemical solutions, SIAM J. Appl. Math. 28 (2) (1975).
\bibitem{BOURNE2014} D. Bourne, T. Fatima, P.V. Meurs, A. Muntean, Is adding charcoal to soil a good method for CO2 sequestration? modeling a spatially homogeneous soil, Appl. Math. Model. 38 (9–10) (2014) 2463--2475.
\bibitem{KORLIE2007}
M.S. Korlie, 3D simulation of cracks and fractures in a molecular solid under stress and compression, Comput. Math. Appl. 54 (2007) 638--650.
\bibitem{REN2012}  J. Ren, X. Yang, Q. Zhu, L.X. Yang, C. Zhang, A novel computer virus model and its dynamics, Nonlinear Anal. Real World Appl. 13 (2012) 376--384.
\bibitem{ZHU2012}  Q. Zhu, X. Yang, J. Ren, Modeling and analysis of the spread of computer virus, Commun. Nonlinear Sci. Numer. Simul. 17 (2012) 5117--5124.
\bibitem{HANSO1992}
J.N. Hanson, The computer generated symbolic approximations to systems of nonlinear ode’s by matrix annihilation and the Newton-Kantorovich method, Comput. Math. Appl. 24 (3) (1992) 29--35.
\bibitem{RAFEI2007} M. Rafei, H. Daniali, D.D. Ganji, Variational iteration method for solving the epidemic model and the prey and predator problem, Appl. Math. Comput. 186 (2007) 1701--1709.
\bibitem{LARA2008} L. Lara, One-step recursive method for solving systems of differential equations, J. Comput. Appl. Math. 216 (2008) 210--216.
\bibitem{GOH2010} S.M. Goh, M.S.M. Noorani, I. Hashim, Introducing variational iteration method to a biochemical reaction model, Nonlinear Anal. Real World Appl. 11 (2010) 2264--2272.
\bibitem{NAKAMOTO2008} S. Nakamoto, A peer-to-peer electronic CashSystem (2008).
\bibitem{ZHANG2019} R. Zhang, R. Xue, L. Liu, Security and Privacy on Blockchain, ACM Computing Surveys 52 (3) (2019).
\bibitem{PAPADIMITRIOU2020} T. Papadimitriou, P. Gogas, F. Gkatzoglou, The evolution of the cryptocurrencies market: A complex networks approach, J. Comput. Appl. Math. 376 (2020) 112831.
\bibitem{BHARDWAJ2020} R. Bhardwaj, S. Das, Chaos control dynamics of cryptovirology in blockchain, Wiley (2020) 129--148.
\bibitem{LIENARD1928}  A. Lienard, Etudes des oscillations entretenues,  Rev. Gen. Electr. 23 (1928).
\bibitem{TSCHIRNHAUS1683} E.W.V. Tschirnhaus, Methodus auferendi omnes terminos intermedios ex data aequatione, Acta Eruditorum (1683) 204--207.
\bibitem{MUKUNDAN2010} T.R. Mukundan, Solution of cubic equations: An alternative method, Resonance 15(4) (2010) 347--350.
\bibitem{MORSE1953} P.M. Morse, H. Feshbach, Methods of theoretical physics, Parts I, McGraw-Hill, New York, 1953.
\end{thebibliography}
\end{document}